\newtheorem{theoremcounter}{Theorem Counter}[section]
\theoremstyle{definition}
\theoremstyle{plain}
\newtheorem{lemma}[theoremcounter]{Lemma}
\newtheorem{proposition}[theoremcounter]{Proposition}
\newtheorem{corollary}[theoremcounter]{Corollary}
\newtheorem{theorem}[theoremcounter]{Theorem}
\numberwithin{equation}{section}
\newcommand{\R}{\mathbb{R}}
\DeclareMathOperator{\ReNew}{Re}
\renewcommand{\Re}{\ReNew}
\begin{document}
% --------------------------------------------------------------------------

\title[]{On real zeros of the Hurwitz zeta function} 

\author{Karin Ikeda} 
\address{Joint Graduate School of Mathematics for Innovation, Kyushu University,
Motooka 744, Nishi-ku, Fukuoka 819-0395, Japan}
\email{ikeda.karin.236@s.kyushu-u.ac.jp}

\subjclass[2020]{11M35}
\thanks{The first author was supported by WISE program (MEXT) at Kyushu University. 
%\framebox{\today \ \now}
}

%11M32: Multiple Dirichlet series and zeta functions and multizeta values

% --------------------------------------------------------------------------

\maketitle

% --------------------------------------------------------------------------
\begin{abstract}
	In this paper, we present results on the uniqueness of the real zeros of the Hurwitz zeta function in given intervals. The uniqueness in question, if the zeros exist, has already been proved for the intervals $(0,1)$ and $(-N, -N+1)$ for $N \geq 5$ by Endo-Suzuki and Matsusaka respectively. We prove the uniqueness of the real zeros in the remaining intervals by examining the behavior of certain associated polynomials. 
 \end{abstract}
% --------------------------------------------------------------------------

% ------------------------1------Introduction------------------------------------------------------
\section{Introduction}
For a fixed real number $a$ with $0<a\le 1$, the Hurwitz zeta function $\zeta(s,a)$ is defined by
$$
\zeta(s,a):=\sum_{n=0}^{\infty}\frac{1}{(n+a)^s}\quad (\Re(s)>1).
$$ 
As usual, write $s=\sigma+it\ (\sigma, t\in\R)$. The series converges absolutely and locally uniformly in the half-plane $\sigma>1$, 
and $\zeta(s,a)$ is analytically continued to the whole complex plane except for a simple pole at $s=1$. 

We are interested in the zeros of $\zeta(s,a)$. First, when $a=1/2$ or 1, the function $\zeta(s,a)$ can be expressed using 
the Riemann zeta function $\zeta(s)$ as $\zeta(s,1/2)=(2^s-1)\zeta(s),\zeta(s,1)=\zeta(s)$. Hence, apart from the non-trivial 
zeros in the critical strip $0<\sigma<1$, there is a simple zero at each even negative (non-positive when $a=1/2$) integer (called a trivial zero). %In particular, the real zeros are completely determined for a=1 (see for example\cite{}).
Throughout the paper, we assume $a\ne 1/2, 1$. 

In \cite{Spira}, Spira showed that the Hurwitz zeta function $\zeta(s,a)$ has no zeros in the region 
\[ \sigma\geq1+a\] or 
\[ \sigma\leq-1\  \text{ and }\  |t|\geq1.\]
He also showed that for $|t|<1$, the only zeros of $\zeta(s,a)$ in the region 
\[ \sigma\leq \begin{cases} -4a-1 & (0<a<1/2) \\
-4a+1 & (1/2<a<1) \end{cases}\]
are on the real axis (he obtained a more precise result).

Since Spira, there has been little progress on the real zeros of the Hurwitz zeta function. Recently, however, Nakamura has given the conditions for the existence of real zeros in the interval $(0,1)$ and $(-1,0)$\,\cite{N1,N2}. This result has been improved by Matsusaka as follows.

Recall the classical result (see for example \cite{Apostol, AIK}) on the values of $\zeta(s,a)$ at non-positive integers
\[ \zeta(-N,a)=-\frac{B_{N+1}(a)}{N+1}\ \ (N\ge0). \]
Here, $B_n(x)$ is the $n$th Bernoulli polynomial defined by the generating series
\[\frac{te^{xt}}{e^t-1}=\sum_{n=0}^{\infty}B_n(x)\frac{t^n}{n!}.\]
Hence, if $B_{N}(a)$ and $B_{N+1}(a)$
have different sign, there should be at least one zero in the interval $(-N,-N+1)$. Matsusaka showed the 
converse is also true.

\begin{theorem}[\cite{MT}]\label{goma15}
Let $N\geq0$ be an integer. Then $\zeta(\sigma,a)$ has a zero in the interval $(-N,-N+1)$ if and only if $B_{N}(a)B_{N+1}(a)<0$.
\end{theorem}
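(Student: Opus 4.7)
The \emph{if} direction is immediate from $\zeta(-N,a) = -B_{N+1}(a)/(N+1)$ together with the intermediate value theorem: for $N \geq 1$, the endpoint values $\zeta(-N,a)$ and $\zeta(-N+1,a) = -B_N(a)/N$ have opposite signs exactly when $B_N(a)B_{N+1}(a) < 0$, forcing a zero in $(-N,-N+1)$; for $N = 0$ the simple pole at $s = 1$ supplies the sign at the right endpoint of $(0,1)$, together with $\zeta(0,a) = 1/2 - a$. The substantive content is the \emph{only if} direction: if $B_N(a)B_{N+1}(a) \geq 0$, then $\zeta(\sigma,a)$ has no zero in the open interval $(-N,-N+1)$.

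For this, the natural tool is Hurwitz's formula. Substituting $s = -N+x$ with $x \in (0,1)$ and pulling out the strictly positive prefactor $2\Gamma(N+1-x)/(2\pi)^{N+1-x}$ gives $\zeta(-N+x,a) = (\text{positive})\cdot \Phi_N(x,a)$, where, with signs determined by the parity of $N$,
\[
\Phi_N(x,a) \;=\; \pm\sin\!\Bigl(\tfrac{\pi x}{2}\Bigr) \sum_{n=1}^{\infty} \frac{\cos(2\pi n a)}{n^{N+1-x}} \;\pm\; \cos\!\Bigl(\tfrac{\pi x}{2}\Bigr) \sum_{n=1}^{\infty} \frac{\sin(2\pi n a)}{n^{N+1-x}}.
\]
The claim reduces to: $\Phi_N(\cdot,a)$ has at most one sign change on $(0,1)$, so that its zero pattern on the open interval matches the sign pattern of its boundary values (which encode those of $B_{N+1}(a)$ and $B_N(a)$).

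Guided by the abstract's hint, the plan is to truncate the series as $\Phi_N = P_N^{(M)} + R_N^{(M)}$. The principal part $P_N^{(M)}(x,a)$ is the sum of the first $M$ terms---a finite trigonometric polynomial in $x$ whose coefficients are polynomials in $\cos(2\pi k a)$ and $\sin(2\pi k a)$ for $k \leq M$; this is the ``associated polynomial'' alluded to in the abstract. The tail satisfies $|R_N^{(M)}(x,a)| \leq \sum_{n > M} n^{x-N-1}$. I would then (i) show that $P_N^{(M)}(\cdot,a)$ admits at most one sign change on $(0,1)$ by a derivative or Descartes-type count on its coefficients, and (ii) show that outside a small neighbourhood of the (unique) zero of $P_N^{(M)}$, the tail $|R_N^{(M)}|$ is strictly dominated by $|P_N^{(M)}|$, so that the zero-structure of $\Phi_N$ inherits from $P_N^{(M)}$. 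For $N \geq 5$ (Matsusaka's regime) this works already with $M = 1$, since $\sin(\pi x/2 + 2\pi a)$ is monotone on $(0,1)$ and $\sum_{n \geq 2} n^{-N}$ is tiny; for $N = 2,3,4$ the same scheme is implemented with $M$ chosen sufficiently large depending on $N$.

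The main obstacle, I expect, is $N = 1$: the tail bound then becomes $O(M^{-(1-x)}/(1-x))$, which \emph{fails} to be uniform in $x$ as $x \to 1^-$, so pure truncation breaks down near the right endpoint. To close this case I would handle the regime of $x$ close to $1$ (equivalently $s$ close to $0$) separately, using the local analyticity of $\zeta(s,a)$ at $s = 0$ together with $\zeta(0,a) = 1/2 - a \neq 0$ to rule out zeros in a one-sided neighbourhood of $s = 0$, and then apply the truncation argument on the remaining compact subinterval where the tail estimate is effective. The $N = 0$ case on $(0,1)$ is already handled by Endo--Suzuki via the explicit forms of $B_1(a)$ and $B_2(a)$ and the pole structure at $s = 1$.
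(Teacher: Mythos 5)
Your ``if'' direction is correct and is exactly the intermediate-value argument this paper already sketches in its introduction from $\zeta(-N,a)=-B_{N+1}(a)/(N+1)$. The substance of the theorem is the ``only if'' direction, and there your proposal diverges from the source: the statement is imported from \cite{MT}, and Matsusaka's proof (whose machinery this paper reuses in \cref{goma21} and \cref{goma22}) rests on the Mellin-type representation $\Gamma(s)\zeta(s,a)=\int_0^\infty H_N(a,x)x^{s-1}\,dx$ together with a sign-change analysis of $H_N(a,x)$ carried out by repeatedly differentiating $h_N(a,x)=x(e^x-1)H_N(a,x)$. In particular, the ``associated polynomials'' of the abstract are the $P_N(a,x)=e^{-ax}\,\partial^2 f_N(a,x)/\partial x^2$, genuine polynomials in $x$ with Bernoulli-polynomial coefficients, not truncations of the Hurwitz-formula series. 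Your route through Hurwitz's formula is therefore genuinely different, and as written it is a plan rather than a proof.

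Two concrete gaps. First, step (i) --- that the truncation $P_N^{(M)}(\cdot,a)$ has at most one sign change on $(0,1)$ --- is the entire content of the theorem and is only asserted. For $M=1$ it holds because $\sin(\pi x/2\pm 2\pi a)$ has at most one zero on an interval of length $\pi/2$, but for $M\ge 2$ the function is a sum of terms $n^{x-N-1}\sin(\pi x/2+\theta_n)$ with $x$-dependent amplitudes and arbitrary phases $\theta_n$, and no Descartes-type rule applies; this would have to be proved for each relevant $N$ and $M$, uniformly in $a$, which is where all the work lies. Second, step (ii) does not deliver what you claim: knowing $|R_N^{(M)}|<|P_N^{(M)}|$ outside a neighbourhood $U$ of the unique zero of $P_N^{(M)}$ only forces $\Phi_N$ to have an \emph{odd} number of zeros in $U$, not exactly one; to conclude you also need a lower bound on $|\partial \Phi_N/\partial x|$ on $U$, i.e.\ control of the derivative of the tail, which the plan does not provide. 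Both points are fixable in principle, but as it stands the proposal establishes only the easy half. (Your patch for $N=1$ near $s=0$ and the deferral of $N=0$ to \cite{ES} are reasonable; note, however, that for $N=0$ the tail bound $\sum_{n>M}n^{x-1}$ diverges, so the truncation scheme cannot be used there at all.)
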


Furthermore, summarizing the works of Endo-Suzuki~\cite{ES} and Matsusaka~\cite{MT}, we know the
uniqueness of zeros in the following cases.

\begin{theorem}{\cite{ES, MT}}\label{goma12}
Suppose $N\ge5$ or $N=0$. If $B_{N}(a)B_{N+1}(a)<0$, the zero of $\zeta(\sigma,a)$ in the interval $(-N,-N+1)$ 
is unique and simple.
\end{theorem}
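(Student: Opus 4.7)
The plan is to split the argument into the two boundary cases, handling $N=0$ and $N\ge 5$ by different techniques, reflecting the distinct contributions of Endo--Suzuki and Matsusaka.

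For $N=0$, working on the interval $(0,1)$, I would avoid the functional equation and instead rely on a Hermite-type integral representation
\[
\zeta(\sigma, a) = \frac{a^{-\sigma}}{2} + \frac{a^{1-\sigma}}{\sigma - 1} + 2 \int_0^\infty \frac{\sin\bigl(\sigma \arctan(y/a)\bigr)}{(a^2 + y^2)^{\sigma/2}(e^{2\pi y} - 1)} \, dy,
\]
valid on $(0,1)$. Existence of a zero in the case $B_0(a)B_1(a) = a - 1/2 < 0$ is immediate from $\zeta(0,a) = 1/2 - a > 0$ and $\zeta(\sigma,a)\to -\infty$ as $\sigma\to 1^-$. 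For uniqueness and simplicity, I would differentiate this formula in $\sigma$ and bound the three contributions to show that $\partial_\sigma \zeta(\sigma,a)$ cannot vanish often enough on $(0,1)$ to admit a second zero of $\zeta(\sigma,a)$.

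For $N\ge 5$, I would apply Hurwitz's functional equation to express
\[
\zeta(-N + u, a) = \frac{2\, \Gamma(N + 1 - u)}{(2\pi)^{N + 1 - u}} \sum_{n = 1}^{\infty} \frac{\cos\bigl(\tfrac{\pi}{2}(N + 1 - u) - 2\pi n a\bigr)}{n^{N + 1 - u}}, \qquad u \in (0, 1).
\]
The $n=1$ term has unit amplitude and an argument of total variation $\pi/2$ as $u$ ranges over $(0,1)$, so it possesses at most one zero there; the tail is bounded by $\zeta(N+1-u) - 1 = O(2^{-N})$. For $N\ge 5$ this tail is small enough relative to the slope of the leading cosine at its sign change to rule out additional zeros of $\zeta(-N+u,a)$, and the same comparison applied to $\partial_u \zeta(-N+u,a)$ yields simplicity.

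The main obstacle is ensuring that the tail estimate in the $N\ge 5$ case is both uniform in $a\in(0,1)$ and tight enough in a neighborhood of the zero of the leading cosine, where cancellation could otherwise be swamped by the remainder. The threshold $N=5$ appears precisely because $2^{-N}$ only falls below the relevant oscillation slope from this point onward; it is the failure of this direct comparison for the remaining indices $1\le N\le 4$ that motivates the refined polynomial approach developed later in this paper.
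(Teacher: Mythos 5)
First, a remark on the comparison itself: the paper does not prove this statement --- it is quoted from \cite{ES,MT} --- so there is no in-paper proof to match against. The relevant in-paper machinery is the unified criterion (\cref{goma24} and \cref{goma22}), which abstracts what both cited proofs actually do: reduce uniqueness and simplicity to showing that $\partial/\partial x\, f_N(a,x)$ has exactly one positive zero, whence $H_N(a,\cdot)$ changes sign exactly once and $x_0^{-\sigma}\Gamma(\sigma)\zeta(\sigma,a)$ is strictly monotone on $(-N,-N+1)$.

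Your $N\ge 5$ half is a sound sketch and is essentially Matsusaka's argument: the phase of the $n=1$ term moves by exactly $\pi/2$, the tail is bounded by $\zeta(N+1-u)-1\le\zeta(5)-1\approx 0.037$ uniformly in $a$ (which enters only through phases), and on the short subinterval where the leading cosine is small one has $|\partial_u\cos\theta|\ge\tfrac{\pi}{2}\sqrt{1-(\zeta(5)-1)^2}$ against $|\partial_u(\text{tail})|\le\sum_{n\ge2}\bigl(\tfrac{\pi}{2}+\log n\bigr)n^{-5}\approx 0.09$, so the sum is strictly monotone there; this gives at most one zero, simplicity, and (with the endpoint values $\zeta(-N,a)=-B_{N+1}(a)/(N+1)$, $\zeta(-N+1,a)=-B_N(a)/N$) exactly one.

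The $N=0$ half, however, has a genuine gap. Existence is fine, but for uniqueness you only propose to ``bound the three contributions'' to $\partial_\sigma\zeta(\sigma,a)$ so that it ``cannot vanish often enough,'' without saying what bound you need or how to get it. Note that $\zeta(\sigma,a)$ is not monotone on $(0,1)$ in general: for small $a$ the term $a^{-\sigma}/2$ in Hermite's formula is increasing in $\sigma$ and dominates near $\sigma=0$, while the pole term drives $\zeta\to-\infty$ as $\sigma\to1^-$; so you cannot hope for $\partial_\sigma\zeta<0$ throughout, and you would instead need ``$\partial_\sigma\zeta$ has at most one zero'' combined with the parity of the number of sign changes. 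Extracting such a count from the $\sigma$-derivative of the Hermite representation --- which mixes $\log a\cdot a^{-\sigma}$, the derivative of the pole term, and an oscillatory integral of no fixed sign --- is exactly the hard step, and it is the step Endo--Suzuki avoid by passing to $\Gamma(s)\zeta(s,a)=\int_0^\infty H_0(a,x)x^{s-1}\,dx$, proving that $H_0(a,\cdot)$ changes sign exactly once, and deducing strict monotonicity of $x_0^{-\sigma}\Gamma(\sigma)\zeta(\sigma,a)$ (this is \cref{goma22} with $N=0$), which yields uniqueness and simplicity simultaneously. As written, your $N=0$ argument is a plan rather than a proof, and simplicity in that case is not addressed at all.
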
  

This paper settles completely the uniqueness of real zeros of $\zeta(s,a)$.
%Our main result is to  complement the remaining cases.

\begin{theorem}\label{main}
For $1\le N\le 4$, the zero of $\zeta(\sigma,a)$ in the interval $(-N,-N+1)$ 
is also unique and simple if $B_{N}(a)B_{N+1}(a)<0$.
\end{theorem}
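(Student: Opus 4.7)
My approach parallels Endo--Suzuki \cite{ES} and Matsusaka \cite{MT}, who handled $N \ge 5$. On each interval $\sigma = -N+\tau$ with $\tau \in (0,1)$, the Hurwitz functional equation gives
$$\zeta(-N+\tau, a) \;=\; \frac{2\,\Gamma(N+1-\tau)}{(2\pi)^{N+1-\tau}}\, G_N(\tau, a), \qquad G_N(\tau, a) \;:=\; \sum_{n=1}^\infty \frac{\sin\!\bigl(2\pi n a + \tfrac{\pi}{2}(\tau - N)\bigr)}{n^{N+1-\tau}},$$
valid for $0 < a < 1$, $a \ne 1/2$. Since the prefactor is positive, the real zeros of $\zeta(\sigma, a)$ on $(-N, -N+1)$ correspond bijectively with zeros of $\tau \mapsto G_N(\tau, a)$ in $(0,1)$. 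In the regime $N \ge 5$, the $n = 1$ sine dominates the tail $\sum_{n\ge 2}$ in sup norm, and uniqueness reduces to the trivial observation that a single sinusoid has at most one zero in a short interval. For $1 \le N \le 4$ this crude domination breaks down.

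The plan is to split $G_N = T_{N,M} + R_{N,M}$, where $T_{N,M}$ is the partial sum of length $M = M(N)$ chosen large enough to dominate. After pulling out the common factor from the sine, $T_{N,M}$ becomes an honest algebraic polynomial in $u := e^{i\pi\tau/2}$ with coefficients depending on $a$ (these are the ``associated polynomials'' of the abstract). For each $N \in \{1,2,3,4\}$ I would then prove:
\begin{enumerate}
\item[(i)] for every $a$ in the admissible set $\{a : B_N(a) B_{N+1}(a) < 0\}$, the function $\tau \mapsto T_{N,M}(\tau, a)$ has exactly one zero $\tau_\ast \in (0,1)$, and $|\partial_\tau T_{N,M}(\tau_\ast, a)| \ge c_N > 0$;
\item[(ii)] $\sup_\tau |R_{N,M}(\tau, a)| + \sup_\tau |\partial_\tau R_{N,M}(\tau, a)| < c_N$, uniformly on the admissible $a$-range.
\end{enumerate}
An intermediate value argument combined with the derivative bound then forces $G_N(\tau,a)$ to have exactly one, simple zero in $(0,1)$. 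Combined with the existence statement \Cref{goma15}, this yields \Cref{main}.

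The main obstacle is step (i) for $N = 1$ and $N = 2$, where the exponent $N+1-\tau$ can be as small as $1$, so the tail decays slowly and $M$ must be taken moderately large; the resulting trigonometric polynomial $T_{N,M}(\cdot,a)$ has enough terms that its behavior in $\tau$ is no longer transparent. I anticipate addressing this by partitioning the admissible $a$-range into subintervals dictated by the signs of $B_N(a)$ and $B_{N+1}(a)$ (and possibly of $B_{N-1}(a)$), and on each piece either establishing strict monotonicity of $T_{N,M}(\cdot,a)$ in $\tau$ or else pinning down its unique zero via explicit polynomial bounds. The simplicity half of \Cref{main} ultimately hinges on the strict inequality $|\partial_\tau T_{N,M}(\tau_\ast, a)| > |\partial_\tau R_{N,M}(\tau_\ast, a)|$, and verifying this uniformly in $a$ across each sub-range is where I expect the quantitative heavy lifting to lie.
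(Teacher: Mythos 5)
Your route through the Hurwitz formula is genuinely different from the paper's, which never touches the functional equation. The paper starts from the Mellin representation $\Gamma(s)\zeta(s,a)=\int_0^\infty H_N(a,x)x^{s-1}\,dx$ (\cref{goma21}), reduces uniqueness and simplicity to showing that $\partial f_N(a,x)/\partial x$ has exactly one positive zero (\cref{goma24}), and then exploits the fact that $e^{-ax}\,\partial^2 f_N(a,x)/\partial x^2$ is an explicit polynomial $P_N(a,x)$ of degree $N$ in $x$; its positive roots are counted by elementary sign and Vieta arguments on the coefficients $C_{N,m}(a)$, which works for every $a\in(0,1)$ with no uniformity issues and only finite polynomial algebra. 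These $P_N$ are the ``associated polynomials'' of the abstract, not truncations of the Dirichlet series.

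Your plan, by contrast, has a structural gap at $N=1$, precisely the case you flag as hardest but for the wrong reason. There the exponent is $N+1-\tau=2-\tau$, which tends to $1$ as $\tau\to1^-$, so $\sup_{\tau\in(0,1)}\sum_{n>M}n^{-(2-\tau)}=\infty$ for every $M$: the uniform sup-norm bound on $R_{1,M}$ in your step (ii) is false as stated. The only rescue is partial summation, whose constants are of size $1/\sin(\pi a)$ and blow up as $a\to0^+$; but the admissible set $\{B_1(a)B_2(a)<0\}=(0,(3-\sqrt3)/6)\cup(1/2,(3+\sqrt3)/6)$ has $a=0$ in its closure (and $a=1/2$, where $\zeta(0,a)=1/2-a\to0$, so even endpoint separations degenerate), hence no uniform $c_1$ exists. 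For $N=2,3,4$ the exponent stays above $2$ and (ii) is unproblematic, but then step (i) --- exactly one zero of the length-$M$ trigonometric polynomial $T_{N,M}(\cdot,a)$ together with a derivative lower bound, uniformly over the admissible $a$-range --- is where the entire content of the theorem lives, and the proposal offers no mechanism for it beyond ``partition and bound.'' As it stands this is a plausible programme for $N=3,4$, an open-ended one for $N=2$, and a broken one for $N=1$; it does not yet constitute a proof of \cref{main}.
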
  

Thus, all real zeros of the Hurwitz zeta function, like the Riemann zeta function, are simple. Moreover, we know their locations as in the following Corollary.

\begin{corollary}\label{cor}
Let $M\ge 0$ be an integer. Then $\zeta(\sigma,a)$ has exactly one zero in the interval $[-2M-2,-2M)$.
\end{corollary}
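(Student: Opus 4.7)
The plan is to combine the existence criterion of Theorem \ref{goma15} with the uniqueness results of Theorems \ref{goma12} and \ref{main}: for every integer $N \geq 0$, $\zeta(\sigma, a)$ has a unique simple zero in the open interval $(-N, -N+1)$ precisely when $B_N(a) B_{N+1}(a) < 0$, and no zero there otherwise. Coupled with the identity $\zeta(-N, a) = -B_{N+1}(a)/(N+1)$, the total zero count of $\zeta(\sigma,a)$ on the partition
\[
[-2M-2, -2M) \;=\; \{-2M-2\} \,\cup\, (-2M-2, -2M-1) \,\cup\, \{-2M-1\} \,\cup\, (-2M-1, -2M)
\]
becomes a function purely of the signs of $u := B_{2M+1}(a)$, $v := B_{2M+2}(a)$, and $w := B_{2M+3}(a)$.

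The main input I would need is the following \emph{sign-alternation lemma}: for every integer $k \geq 0$ and every $a \in (0,1) \setminus \{1/2\}$ one has $B_{2k+1}(a) B_{2k+3}(a) < 0$, and in particular every odd-index Bernoulli polynomial is nonzero on this set. For $k = 0$ this is immediate from $B_1(a) B_3(a) = (a - 1/2)^2 \cdot a(a-1)$. For $k \geq 1$ I would combine the classical Fourier expansion
\[
B_{2k+1}(a) \;=\; \frac{2(-1)^{k+1}(2k+1)!}{(2\pi)^{2k+1}} \sum_{n=1}^{\infty} \frac{\sin(2\pi n a)}{n^{2k+1}} \qquad (0 \leq a \leq 1)
\]
with the positivity $\sum_{n=1}^{\infty} \sin(n\theta)/n^s > 0$ for $\theta \in (0,\pi)$ and $s > 1$, which follows from the Fejér--Jackson inequality via Abel summation. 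This yields $\sgn B_{2k+1}(a) = (-1)^{k+1}$ on $(0,1/2)$, and the reflection formula $B_n(1-a) = (-1)^n B_n(a)$ transfers the conclusion to $(1/2, 1)$. Thus $\sgn B_{2k+1}(a)$ strictly alternates in $k$, giving the lemma.

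With $uw < 0$ and $u, w \neq 0$ in hand, the counting is immediate. If $v \neq 0$, then $\sgn v$ agrees with exactly one of $\sgn u, \sgn w$, so precisely one of $uv, vw$ is negative; by Theorem \ref{goma15} exactly one of $(-2M-2,-2M-1)$ and $(-2M-1,-2M)$ contains a zero, and the two integer points contribute none because $v, w \neq 0$. If $v = 0$, then $\zeta(-2M-1,a) = 0$, while both $uv$ and $vw$ vanish so Theorem \ref{goma15} excludes zeros in the two open subintervals, and $w \neq 0$ excludes one at $-2M-2$. Either way $[-2M-2,-2M)$ contains exactly one zero of $\zeta(\sigma,a)$. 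The principal obstacle in this plan is the sign-alternation lemma: the existence-and-uniqueness machinery for zeros of $\zeta(\sigma,a)$ is already in place, so everything after that reduces to short sign bookkeeping, while the positivity of the sine series is the one nontrivial analytic ingredient.
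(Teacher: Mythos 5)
Your proposal is correct and takes essentially the same route as the paper: the paper's own proof likewise reduces the corollary to the fact that $B_{2M+1}(a)B_{2M+3}(a)<0$ for $a\ne 1/2,1$ combined with \cref{goma15}, \cref{goma12}, and \cref{main}, though it asserts that sign fact without proof and skips the bookkeeping. Your Fourier-series/Fej\'er--Jackson argument for the sign-alternation lemma and your explicit treatment of the case $B_{2M+2}(a)=0$ (a zero landing at $\sigma=-2M-1$) supply details the paper leaves implicit.
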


\section{Proofs of \cref{main} and \cref{cor}}
We start with the following (more or less well-known) integral representation of the Hurwitz zeta function in a particular vertical strip (cf. \cite[Proposition~2.3]{MT} for a proof).
\begin{proposition}\label{goma21}
In the region $-N<Re(s)<-N+1\,(N\geq0)$, we have
$$
\Gamma(s)\zeta(s,a)=\int_{0}^{\infty}H_N(a,x)x^{s-1}dx,\quad H_N(a,x):=\frac{e^{(1-a)x}}{e^x-1}-\sum_{n=0}^{N}\frac{B_n(1-a)}{n!}x^{n-1}.
$$
\end{proposition}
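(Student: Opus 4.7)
The plan is to deduce the identity by analytic continuation from the classical Mellin representation valid on $\Re(s)>1$, splitting the integration at $x=1$ and subtracting the initial segment of the Bernoulli expansion at the origin. The starting point is the standard formula
\[
\Gamma(s)\zeta(s,a)=\int_0^\infty \frac{e^{(1-a)x}}{e^x-1}x^{s-1}\,dx\qquad(\Re(s)>1),
\]
obtained from $\Gamma(s)=\int_0^\infty e^{-u}u^{s-1}\,du$ via $u=(n+a)x$ and summing over $n\ge 0$. From the defining generating series for the Bernoulli polynomials one also has the convergent Laurent expansion $\frac{e^{(1-a)x}}{e^x-1}=\sum_{n=0}^\infty \frac{B_n(1-a)}{n!}x^{n-1}$ on a punctured neighborhood of $0$, so $H_N(a,x)=O(x^N)$ as $x\to 0^+$. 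Since $0<1-a<1$, the function $e^{(1-a)x}/(e^x-1)$ decays exponentially at $+\infty$, whence $H_N(a,x)=O(x^{N-1})$ there. Consequently $\int_0^\infty H_N(a,x)x^{s-1}\,dx$ converges absolutely precisely on the open strip $-N<\Re(s)<-N+1$ that appears in the statement.

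The main calculation is to split the Mellin integral at $x=1$ and, on $(0,1)$, replace the integrand by its Bernoulli expansion; for $\Re(s)>1$ each of the first $N+1$ terms integrates to $B_n(1-a)/[n!(n+s-1)]$, giving
\[
\Gamma(s)\zeta(s,a)=\sum_{n=0}^N\frac{B_n(1-a)}{n!(n+s-1)}+\int_0^1 H_N(a,x)x^{s-1}\,dx+\int_1^\infty \frac{e^{(1-a)x}}{e^x-1}x^{s-1}\,dx.
\]
The first integral on the right now converges whenever $\Re(s)>-N$ and the second is entire in $s$, so by analytic continuation this identity extends to the whole half-plane $\Re(s)>-N$. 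On the strip $-N<\Re(s)<-N+1$ one has $\Re(n+s-1)<0$ for every $0\le n\le N$, so the relation $(n+s-1)^{-1}=-\int_1^\infty x^{n+s-2}\,dx$ rewrites the finite sum as $-\int_1^\infty\sum_{n=0}^N\frac{B_n(1-a)}{n!}x^{n-1}\cdot x^{s-1}\,dx$. Adding this to the exponential integral over $(1,\infty)$ collapses to $\int_1^\infty H_N(a,x)x^{s-1}\,dx$, and joining with the integral over $(0,1)$ yields the claimed identity.

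The only points requiring genuine care are the bookkeeping of absolute-convergence regions, with the asymmetric strip $-N<\Re(s)<-N+1$ being forced by matching the $O(x^N)$ behavior of $H_N$ at zero against the $O(x^{N-1})$ growth of its polynomial tail at infinity, together with the standard meromorphic-continuation argument via the identity principle. All interchanges of sum and integral are trivial because the Bernoulli sum has only $N+1$ terms, so I expect no substantive obstacle; this is essentially the textbook Mellin-subtraction derivation of analytic continuation, organized so that the subtracted sum and the exponential tail recombine into a single integrand $H_N(a,x)$ on one specific vertical strip.
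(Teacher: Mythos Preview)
Your proof is correct and complete. The paper does not actually prove this proposition but merely cites \cite[Proposition~2.3]{MT}; your argument---analytic continuation of the classical Mellin integral after subtracting the first $N+1$ terms of the Bernoulli expansion, then reabsorbing the rational sum as an integral over $(1,\infty)$ on the strip where $\Re(n+s-1)<0$---is exactly the standard derivation one would find there.
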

Set
\begin{align*}
h_N(a,x)&:=x(e^x-1)H_N(a,x)\\
\intertext{and}
f_N(a,x)&:=e^{(a-1)x}\frac{\partial^{N+1}}{\partial x^{N+1}}h_N(a,x).
\end{align*}

We prove our main theorem (\cref{main}) by using the following criterion, which generalizes the previous works (\cite{ES,MT}).

\begin{proposition}\label{goma24}
If the function $\partial/\partial x f_N(a,x)$ has exactly one zero in the range $x>0$, then the Hurwitz zeta function  $\zeta(s,a)$ has a unique simple real zero in the interval $(-N,-N+1)$.
\end{proposition}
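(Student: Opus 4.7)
The plan is to translate uniqueness and simplicity of real zeros of $\zeta(\sigma,a)$ into a sign-change count on $(0,\infty)$, and then to read that count off the behaviour of $f_N$.

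First, by \cref{goma21}, $\Gamma(\sigma)\zeta(\sigma,a) = \int_0^\infty H_N(a,x)\,x^{\sigma-1}\,dx$ for $\sigma \in (-N,-N+1)$, and $\Gamma(\sigma)$ is nonvanishing on this interval; so the real zeros of $\zeta(\sigma,a)$ there, counted with multiplicity, coincide with those of the integral. The substitution $x=e^{-t}$ turns the right-hand side into a two-sided Laplace transform of $H_N(a,e^{-t})$, and P\'olya's variation-diminishing theorem then bounds the number of real zeros (with multiplicity) by the number of sign changes of $H_N(a,\cdot)$ on $(0,\infty)$. Absolute integrability for $\sigma$ in the relevant interval is routine from the Laurent expansion $H_N(a,x) = \sum_{n \ge N+1} \frac{B_n(1-a)}{n!}x^{n-1}$, giving $H_N(a,x) = O(x^N)$ near $0$, together with the exponential decay of $e^{(1-a)x}/(e^x-1)$ and the polynomial growth of the subtracted terms at $\infty$.

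Since $x(e^x-1)>0$ on $(0,\infty)$, the signs of $H_N(a,x)$ and $h_N(a,x)$ agree there. The same Laurent expansion gives $h_N(a,\cdot) = O(x^{N+2})$ near $0$, so $h_N^{(j)}(a,0)=0$ for $0 \le j \le N$ and hence $f_N(a,0)=0$. Taylor's formula with integral remainder then realises $h_N$ as the $(N+1)$-fold antiderivative of $e^{(1-a)y}f_N(a,y)$ starting from zero,
$$
h_N(a,x) = \frac{1}{N!}\int_0^x (x-y)^N e^{(1-a)y}\,f_N(a,y)\,dy.
$$
An elementary lemma — an antiderivative of a continuous function on $(0,\infty)$ starting from zero has no more sign changes on $(0,\infty)$ than the integrand, since on each maximal constant-sign interval of the integrand the antiderivative is monotone and on the initial interval cannot return to zero — iterated $N+1$ times then shows that the number of sign changes of $h_N(a,\cdot)$, and hence of $H_N(a,\cdot)$, on $(0,\infty)$ is bounded by that of $f_N(a,\cdot)$.

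Under the hypothesis that $\partial_x f_N(a,x)$ has a unique zero $x_0 \in (0,\infty)$, the function $f_N(a,\cdot)$ is strictly monotone on $(0,x_0)$ and on $(x_0,\infty)$. Since $f_N(a,0)=0$, monotonicity on $(0,x_0)$ forbids a return to zero there, while the second monotone branch admits at most one further sign change; hence $f_N(a,\cdot)$ has at most one sign change on $(0,\infty)$, and by the chain of bounds above $\zeta(\sigma,a)$ has at most one real zero in $(-N,-N+1)$ counted with multiplicity. Combined with \cref{goma15}, this yields exactly one simple real zero. The principal technical point is the invocation of the multiplicity-aware form of P\'olya's variation-diminishing theorem: the pure sign-change version is classical, but the refinement counting zeros with multiplicity — needed to extract simplicity directly, without a separate calculation of $\zeta'(\sigma^\ast,a)$ — rests on the total positivity of the Mellin kernel $(\sigma,x)\mapsto x^{\sigma-1}$ and must be applied in our non-compactly-supported two-sided setting.
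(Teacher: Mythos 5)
Your argument is correct, and its core step --- using \cref{goma21}, the order-$(N+2)$ vanishing of $h_N$ at $x=0$, and the fact that an antiderivative anchored at $0$ cannot have more sign changes than its integrand, so that $H_N(a,\cdot)$ inherits the at-most-one sign change of $f_N(a,\cdot)$ --- is the same mechanism as the paper's \cref{goma22}, which runs the identical count in the opposite direction (differentiating $h_N$ up to $f_N$ rather than integrating $f_N$ down to $h_N$, and using the limits as $x\to\infty$ together with $B_N(a)B_{N+1}(a)<0$ to get \emph{exactly} one sign change). Where you genuinely diverge is the final step: the paper needs no variation-diminishing theorem, since with $x_0$ the unique sign change it writes $x_0^{-\sigma}\Gamma(\sigma)\zeta(\sigma,a)=\int_0^\infty H_N(a,x)(x/x_0)^{\sigma}\,dx/x$, whose $\sigma$-derivative $\int_0^\infty H_N(a,x)\log(x/x_0)(x/x_0)^{\sigma}\,dx/x$ has a one-signed, not identically vanishing integrand; strict monotonicity gives uniqueness and the nonzero derivative gives simplicity. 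Your appeal to the multiplicity-aware P\'olya/Schoenberg theorem for the bilateral Laplace transform is legitimate (the kernel $e^{-\sigma t}$ is extended totally positive and the statement is classical), but it is far heavier than needed, and you leave it as a flagged technical point rather than discharging it: in the only case you use ($S^-(H_N)\le 1$), the multiplicity refinement \emph{is} the paper's two-line computation, because $F(\sigma^\ast)=0$ lets you insert the factor $\log(x/x_0)$ into $F'(\sigma^\ast)$ for free. If you keep your route, supply that computation or a precise citation. A minor structural difference in your favor: you need only ``at most one sign change'' plus \cref{goma15} for existence, so you avoid carrying the hypothesis $B_N(a)B_{N+1}(a)<0$ through the sign analysis as the paper's \cref{goma22} does.
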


The proposition readily follows from the lemma below, which is a generalization of \cite[Lemma 2.2. 2.3]{ES}. 

\begin{lemma}\label{goma22}
Suppose $B_{N}(a)B_{N+1}(a)<0$ and the function $\partial/\partial xf_N(a,x)$ has exactly one real zero in $x>0$. Then, there exists a unique positive number $x_0=x_0(a)>0$ such that $H_N(a,x_0)=0$ and either
$$
H_N(a,x)<0\,\,(0<x<x_0)\  \text{and}\ H_N(a,x)>0\,\,(x_0<x),
$$
or
$$
H_N(a,x)>0\,\,(0<x<x_0)\  \text{and}\,\, H_N(a,x)<0\,\,(x_0<x)
$$
holds. Moreover, the function $x_0^{-\sigma}\Gamma(\sigma)\zeta(\sigma,a)$ is monotonically increasing or monotonically decreasing 
in $\sigma$ in the interval $(-N,-N+1)$.
\end{lemma}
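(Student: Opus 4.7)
My plan is to split the argument into two steps: first, use the hypotheses to establish that $H_N(a,\cdot)$ has exactly one sign change on $(0,\infty)$; second, deduce monotonicity of $x_0^{-\sigma}\Gamma(\sigma)\zeta(\sigma,a)$ from this sign structure via the integral representation in \cref{goma21}.

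For the first step I will start with asymptotics. The Bernoulli generating function gives
\[
H_N(a,x)\sim\frac{B_{N+1}(1-a)}{(N+1)!}\,x^{N}\quad(x\to0^+),\qquad H_N(a,x)\sim-\frac{B_N(1-a)}{N!}\,x^{N-1}\quad(x\to\infty),
\]
the second because $e^{(1-a)x}/(e^x-1)$ decays exponentially. The identity $B_n(1-a)=(-1)^nB_n(a)$ combined with $B_N(a)B_{N+1}(a)<0$ makes these two boundary signs opposite, so the intermediate value theorem yields at least one positive zero. For the matching upper bound I will pass to $h_N$, whose positive zeros coincide with those of $H_N$ since $x(e^x-1)>0$. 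The expansion $H_N(a,x)=\sum_{n\geq N+1}\frac{B_n(1-a)}{n!}x^{n-1}$ shows $h_N$ vanishes to order at least $N+2$ at $0$, so $h_N^{(j)}(a,0)=0$ for $0\le j\le N+1$, and in particular $f_N(a,0)=0$. The driving fact will be the contrapositive of Rolle's theorem: if $g(0)=0$ and $g'$ has at most $k$ zeros in $(0,\infty)$, then $g$ does too. Applying this to $f_N$ with the hypothesis that $\partial f_N/\partial x$ has exactly one positive zero forces $f_N$ to have at most one positive zero; since $f_N=e^{(a-1)x}h_N^{(N+1)}$ and the exponential factor is nonvanishing, the same bound passes to $h_N^{(N+1)}$. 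Iterating the Rolle step $N+1$ times down the chain $h_N^{(N)},\ldots,h_N$ (each step allowed because $h_N^{(j)}(a,0)=0$), I conclude $h_N$, hence $H_N(a,\cdot)$, has at most one positive zero. Together with IVT this produces a unique $x_0>0$; the mismatch of boundary signs forces it to be a sign change, giving one of the two sign patterns in the statement.

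For the monotonicity statement I will differentiate the integral in \cref{goma21} with respect to $\sigma$ (justified by the asymptotics of $H_N$ plus slow growth of $\log$ on compact subsets of $(-N,-N+1)$), obtaining
\[
\frac{d}{d\sigma}\bigl(x_0^{-\sigma}\Gamma(\sigma)\zeta(\sigma,a)\bigr)=x_0^{-\sigma}\int_0^{\infty}H_N(a,x)\,x^{\sigma-1}\log(x/x_0)\,dx.
\]
Since both $H_N(a,x)$ and $\log(x/x_0)$ change sign only at $x_0$, their product has constant sign on $(0,\infty)\setminus\{x_0\}$; multiplied by the positive weights $x^{\sigma-1}$ and $x_0^{-\sigma}$, the integral is strictly positive in the first case of the sign pattern and strictly negative in the second, yielding strict monotonicity of the desired type. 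The hard part will be the bookkeeping around $x=0$: to legitimize iterating Rolle $N+1$ times one needs the vanishing orders of $h_N$ and $f_N$ at the origin, together with the opposite-sign asymptotics of $h_N^{(N+1)}$ at the endpoints (needed only indirectly, to know that the unique zero is a genuine sign change). Once these boundary facts are in hand, everything else is either a mechanical Rolle iteration or the sign inspection in the last display.
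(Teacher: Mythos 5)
Your proof is correct and follows essentially the same route as the paper: both arguments rest on the order-$(N+2)$ vanishing of $h_N$ at the origin, the boundary asymptotics determined by $B_{N}(1-a)$ and $B_{N+1}(1-a)$, and a descent through the chain of derivatives from $\partial f_N/\partial x$ down to $H_N$, followed by the same sign inspection of the integral from \cref{goma21}. The only difference is organizational: you run the descent as an iterated Rolle zero-count capped by a single intermediate value argument at the bottom, whereas the paper tracks the sign change explicitly at each level of the induction.
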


\begin{proof}   The proof goes along the same line as in \cite{ES}. 
Since $x(e^x-1)>0$ for any $x>0$, we may look at the sign of $h_N(a,x)$. From the definition of $h_N(a,x)$, it holds
$$
h_N(a,0)=0,
$$
$$
\lim_{x\to\infty}h_N(a,x)=
\begin{cases}
\infty & (B_{N}(1-a)<0),\\
-\infty & (B_{N}(1-a)>0).
\end{cases}
$$
Also, the first derivative $\partial/\partial xh_N(a,x)$ can be calculated as
\begin{align*}
\frac{\partial}{\partial x}h_N(a,x)
&=(1+(1-a)x)e^{(1-a)x}-e^x\sum_{n=0}^{N}\frac{B_n(1-a)}{n!}x^n\\
&\quad-(e^x-1)\sum_{n=1}^{N}\frac{B_n(1-a)}{(n-1)!}x^{n-1}
\end{align*}
and from this one concludes
$$
\frac{\partial}{\partial x}h_N(a,0)=0,
$$
$$
\lim_{x\to\infty}\frac{\partial}{\partial x}h_N(a,x)=
\begin{cases}
\infty & (B_{N}(1-a)<0),\\
-\infty & (B_{N}(1-a)>0).
\end{cases}
$$
Likewise, by noting that the order of vanishing of $h_N(a,x)$ at $x=0$ is $N+2$, we similarly have
$$
\frac{\partial^i}{\partial x^i}h_N(a,0)=0,
$$
$$
\lim_{x\to\infty}\frac{\partial^i}{\partial x^i}h_N(a,x)=
\begin{cases}
\infty & (B_{N}(1-a)<0),\\
-\infty & (B_{N}(1-a)>0)
\end{cases}
$$
for $1\le i\le N+1$.
Finally, since the next order derivative
\begin{align*}
&\frac{\partial}{\partial x}f_N(a,x)\  \left(=(a-1)e^{(a-1)x}\frac{\partial^{N+1}}{\partial x^{N+1}}h_N(a,x)+e^{(a-1)x}\frac{\partial^{N+2}}{\partial x^{N+2}}h_N(a,x)\right)\\
&=(1-a)^{N+1}-\sum_{m=1}^{N}\left(\sum_{k=0}^{N-m}\binom{N+1}{k}B_{m+k}(1-a)\right)\frac{x^{m-1}}{(m-1)!}e^{ax}\\
&\quad-a\sum_{m=0}^{N}\left(\sum_{k=0}^{N-m}\binom{N+1}{k}B_{m+k}(1-a)\right)\frac{x^{m}}{m!}e^{ax},
\end{align*}
we obtain
\begin{align}
\frac{\partial}{\partial x}f_N(a,0)&=(N+2)B_{N+1}(1-a),\label{start0}\\  
\intertext{and}
\lim_{x\to\infty}\frac{\partial}{\partial x}f_N(a,x)&=
\begin{cases}
\infty & (B_{N}(1-a)<0),\\ 
-\infty & (B_{N}(1-a)>0).\label{start1}
\end{cases}
\end{align}
From this and the assumptions $B_{N}(a)B_{N+1}(a)<0$ (note $B_n(1-a)=(-1)^nB_n(a)$) and the function $\partial/\partial xf_N(a,x)$ has exactly one real zero in $x>0$, we inductively obtain the first assertion of the lemma. 

As for the second, use \cref{goma21} to obtain the expression
\begin{align*}
x_0^{-\sigma}\Gamma(\sigma)\zeta(\sigma,a)
&=x_0^{-\sigma}\int_{0}^{\infty}\left(\frac{e^{(1-a)x}}{e^x-1}-\sum_{n=0}^{N}\frac{B_n(1-a)}{n}x^{n-1}\right)x^{\sigma-1}dx\\
&=\int_{0}^{x_0}H_N(a,x)\left(\frac{x}{x_0}\right)^{\sigma}\frac{dx}{x}+\int_{x_0}^{\infty}H_N(a,x)\left(\frac{x}{x_0}\right)^{\sigma}\frac{dx}{x},
\end{align*}
from which the assertion follows easily.
\end{proof}

\begin{proof}[Proof of \cref{main}]

Let $N\geq 1$. From \cref{goma24}, we only need to show that the function $\partial/\partial x f_N(a,x)$ has exactly one zero in the range $x>0$.
We see from \eqref{start0} and \eqref{start1}  that the $\partial/\partial x f_N(a,x)$ has at least one zero in $x>0$. 

To show that it is unique, we examine the derivative $\partial^2/\partial x^2f_N(a,x)$, which is computed as (an empty sum being understood to be $0$)

\begin{align*}
&\frac{\partial^2}{\partial x^2}f_N(a,x)=\\
&e^{ax}\left(-\sum_{m=0}^{N-2}\Biggl(\sum_{k=0}^{N-2-m}\binom{N+1}{k}B_{m+2+k}(1-a)+2a\sum_{k=0}^{N-1-m}\binom{N+1}{k}B_{m+1+k}(1-a) \right.\\
&+a^2\sum_{k=0}^{N-m}\binom{N+1}{k}B_{m+k}(1-a)\Biggr)\frac{x^m}{m!}\\
&+\left(-2aB_{N}(1-a)-a^2\left((N+1)B_{N}(1-a)+B_{N-1}(1-a)\right)\right)\frac{x^{N-1}}{(N-1)!}\\
&\left. -a^2B_{N}(1-a)\frac{x^{N}}{N!}\right),
\end{align*}
so that we look at the polynomial $e^{-ax}\partial^2/\partial x^2f_N(a,x)$ in $x$ of degree $N$. 
Set $P_N(a,x):=e^{-ax}\partial^2/\partial x^2f_N(a,x)$ and let  $C_{N,m}(a)$
be the coefficient of $x^m$ in $P_N(a,x)$:
\[ P_N(a,x)=\sum_{m=0}^{N} C_{N,m}(a)\,x^m. \]
 Explicitly, the coefficients are given as
\begin{align*}
C_{N,m}(a)&=
 -\Biggl(\sum_{k=0}^{N-2-m}\binom{N+1}{k}B_{m+2+k}(1-a)+2a\sum_{k=0}^{N-1-m}\binom{N+1}{k}B_{m+1+k}(1-a)\\
 &+a^2\sum_{k=0}^{N-m}\binom{N+1}{k}B_{m+k}(1-a)\Biggr)\frac{1}{m!}\quad(0\le m\le N).
\end{align*}

We will show that the polynomial $P_N(a,x)$ has either no or one real zero in the range $x>0$.
Then we can conclude, from the sign of $\partial/\partial xf_N(a,0)$ and the behavior of $\partial/\partial xf_N(a,x)$ as $x\to \infty$,
that the $\partial/\partial xf_N(a,x)$ has only one zero in $x>0$.

Now we look at each case of $N$ separately.\\

%--------------------------------2.1--------------------------------------------
\textbf{The case of $N=1$.}
This case is trivial because $P_1(a,x)$ is a linear polynomial.\\

%---------------------------------2.2----------------------------------------
\textbf{The case of $N=2$.}
In this case, the $P_2(a,x)$ is a polynomial of degree $2$ in $x$, and their coefficients are given as
\begin{align*}
C_{2,0}(a)&=-a^2+(2a+3a^2)B_1(a)-(1+6a+3a^2)B_2(a),\\
C_{2,1}(a)&=a^2B_1(a)+\left(-2a-3a^2\right)B_2(a),\\
C_{2,2}(a)&=-\frac{1}{2}a^2B_2(a).
\end{align*}
Let $\alpha$ and $\beta$ be two roots of $P_2(a,x)$, then we see that
\begin{align}\label{goma1}
\alpha+\beta>0\Longleftrightarrow C_{2,1}(a)C_{2,2}(a)<0,\qquad\alpha\beta>0\Longleftrightarrow C_{2,0}(a)C_{2,2}(a)>0.
\end{align}

Now, we examine the sign changes of each polynomials (in $a$) $C_{2,0}(a),\,C_{2,1}(a),\,C_{2,2}(a)$.  First, consider $C_{2,0}(a)$. From the numerical values
\begin{align*}
&\mu_1=C_{2,0}(-1.251)=-0.00334706,\qquad \nu_1=C_{2,0}(-1.250)=0.00911458,\\
&\mu_2=C_{2,0}(-0.115)=0.000708631,\qquad \,\nu_2=C_{2,0}(-0.114)=-0.00118935,\\
&\mu_3=C_{2,0}(0.402)=-0.000598225,\qquad\, \nu_3=C_{2,0}(0.403)=0.000839283,\\
&\mu_4=C_{2,0}(0.962)=0.00376954,\qquad\quad\,\,\, \nu_4=C_{2,0}(0.963)=-0.000230453,
\end{align*}
we see that $C_{2,0}(a)$ has at least one root in each interval $(\mu_j,\nu_j)$ for $1\le j\le4$.  But since $C_{2,0}(a)$ is of
degree $4$, these are exactly the four roots. We also compute the approximate zeros of  $C_{2,0}'(a)$:
\begin{align*}
&C_{2,0}'(-0.873)=0.000063404,\qquad C_{2,0}'(-0.872)=-0.0193418,\\
&C_{2,0}'(0.128)=-0.00116582,\qquad\,\, C_{2,0}'(0.129)=0.00623973,\\
&C_{2,0}'(0.744)=0.0100306,\qquad \quad\,\, \,\,C_{2,0}'(0.745)=-0.0019235.
\end{align*}
From these, we obtain the following table.

\begin{center}
\begin{tabular}{|c||ccccccccccc|}
\hline
$a$ & $0$ & $\cdots$ & $0.128\ldots$ & $\cdots$ & $0.402\ldots$ & $\cdots$ & $0.744\ldots$ & $\cdots$ & $0.962\ldots$ & $\cdots$ & $1$\\
\hline
$C_{2,0}^{'}(a)$ & $-$ & $-$ & $0$ & $+$ & $+$ & $+$ & $0$ & $-$ & $-$ & $-$ & $-$\\
\hline
$C_{2,0}(a)$ & $-1/6$ & $\searrow$ & & $\nearrow$ & $0$ & $\nearrow$ & & $\searrow$ & $0$ & $\searrow$ & \\
\hline
\end{tabular}
\end{center}
We obtain the similar tables for $C_{2,1}(a),C_{2,2}(a)$ as follows.

\begin{center}
\begin{tabular}{|c||ccccccccccc|}
\hline
$a$ & $0$ & $\cdots$ & $0.129\ldots$ & $\cdots$ & $0.253\ldots$ & $\cdots$ & $0.684\ldots$ & $\cdots$ & $0.899\ldots$ & $\cdots$ & $1$\\
\hline
$C_{2,1}^{'}(a)$ & $-$ & $-$ & $0$ & $+$ & $+$ & $+$ & $0$ & $-$ & $-$ & $-$ & $-$\\
\hline
$C_{2,1}(a)$ & $0$ & $\searrow$ & & $\nearrow$ & $0$ & $\nearrow$ & & $\searrow$ & $0$ & $\searrow$ & \\
\hline
\end{tabular}
\end{center}

\begin{center}
\begin{tabular}{|c||ccccccccccc|}
\hline
$a$ & $0$ & $\cdots$ & $0.135\ldots$ & $\cdots$ & $0.211\ldots$ & $\cdots$ & $0.614\ldots$ & $\cdots$ & $0.788\ldots$ & $\cdots$ & $1$\\
\hline
$C_{2,2}^{'}(a)$ & $-$ & $-$ & $0$ & $+$ & $+$ & $+$ & $0$ & $-$ & $-$ & $-$ & $-$\\
\hline
$C_{2,2}(a)$ & $0$ & $\searrow$ & & $\nearrow$ & $0$ & $\nearrow$ & & $\searrow$ & $0$ & $\searrow$ & \\
\hline
\end{tabular}
\end{center}
Based on these, the rough graphs of  $C_{2,0}(a),C_{2,1}(a),C_{2,2}(a)$ can be drawn as follows.\\
%グラフ書く

\begin{tikzpicture}
\begin{axis}[
        axis y line=center,
        axis x line=middle, 
        axis on top=true,
        xmin=-0.2,
        xmax=1.2,
        ymin=-0.5,
        ymax=0.5,
        height=10.0cm,
        width=15.0cm,
        grid,
        xtick={0,0.2,...,1},
        ytick={-0.4,-0.3,...,0.4},
    ]
    \addplot [domain=0:1, samples=50, mark=none, ultra thick, blue] {-3*(x)^4 + 4*(x)^2 - (x) - 1/6};
    \addplot [domain=0:1, samples=50, mark=none, ultra thick, orange] {-3*(x)^4 + 2*(x)^3 + (x)^2 - 1/3*(x)};
    \addplot [domain=0:1, samples=50, mark=none, ultra thick, red] {-1/2*(x)^4 + 1/2*(x)^3 - 1/12*(x)^2};
    \node [left, blue] at (axis cs: 0.8,0.43) {$C_{2,0}(a)$};
    \node [left, orange] at (axis cs: 0.8,0.26) {$C_{2,1}(a)$};
    \node [left, red] at (axis cs: 0.8,0.05) {$C_{2,2}(a)$};
\end{axis}
\end{tikzpicture}

Let $c_{2,0,i}, c_{2,1,i}, c_{2,2,i}\ (i=1,2)$ be the zeros of $C_{2,0}(a), C_{2,1}(a)$, and $C_{2,2}(a)$ in $0<a<1$.
We see that 
\[ 0<c_{2,2,1}<c_{2,1,1}<c_{2,0,1} <c_{2,2,2}<c_{2,1,2}<c_{2,0,2}<1 \]
and, if $a$ is in the range
\[ 0<a<c_{2,2,1}\,\text{ or } c_{2,0,1}<a<c_{2,2,2}\,\text{ or } c_{2,0,2}<a<1, \]
then all coefficients of $P_2(a,x)$ have the same signs and so $P_2(a,x)=0$ has no solution in $x>0$. If $a$ satisfies 
\[ c_{2,2,1}<a<c_{2,0,1}\,\text{ or } c_{2,2,2}<a<c_{2,0,2}, \]
then we have $\alpha \beta<0$ and $P_2(a,x)=0$ has one negative solution and one positive solution.

We therefore have shown that in all cases $P_2(a,x)$ has at most one real zero in the range $x>0$.\\

%-------------------------------2.3-------------------------------------------------
\textbf{The case of $N=3$.}
We proceed in the same way as in the case of $N=2$. The $P_3(a,x)$ is a polynomial of degree $3$ in $x$. Let $\alpha$, $\beta$ and $\gamma$ be three roots of $P_3(a,x)$, then we see that

\begin{align}\label{goma2}
\begin{split}
&\alpha+\beta+\gamma>0\Longleftrightarrow C_{3,2}(a)C_{3,3}(a)<0,\\
&\alpha\beta+\beta\gamma+\gamma\alpha>0\Longleftrightarrow C_{3,1}(a)C_{3,3}(a)>0,\\
&\alpha\beta\gamma>0\Longleftrightarrow C_{3,0}(a)C_{3,3}(a)<0.
\end{split}
\end{align}

We obtain the following table and rough graphs of $C_{3,0}(a)$, $C_{3,1}(a)$, $C_{3,2}(a)$, $C_{3,3}(a)$.

\begin{flushleft}
\begin{tabular}{|c||ccccccccccc|}
\hline
$a$ & $0$ & $\cdots$ & $0.141\ldots$ & $\cdots$ & $0.440\ldots$ & $\cdots$ & $0.696\ldots$ & $\cdots$ & $0.976\ldots$ & $\cdots$ & $1$\\
\hline
$C_{3,0}^{'}(a)$ & $+$ & $+$ & $+$ & $+$ & $0$ & $-$ & $-$ & $-$ & $0$ & $+$ & $+$\\
\hline
$C_{3,0}(a)$ & $-2$ & $\nearrow$ & $0$ & $\nearrow$ & & $\searrow$ & $0$ & $\searrow$ & & $\nearrow$ & \\
\hline
\end{tabular}
\end{flushleft}

\begin{flushleft}
\begin{tabular}{|c||ccccccccc|}
\hline
$a$ & $0$ & $\cdots$ & $0.364\ldots$ & $\cdots$ & $0.622\ldots$ & $\cdots$ & $0.946\ldots$ & $\cdots$ & $1$\\
\hline
$C_{3,1}^{'}(a)$ & $+$ & $+$ & $0$ & $-$ & $-$ & $-$ & $0$ & $+$ & $+$\\
\hline
$C_{3,1}(a)$ & $0$ & $\nearrow$ & & $\searrow$ & $0$ & $\searrow$ & & $\nearrow$ & \\
\hline
\end{tabular}
\end{flushleft}

\begin{flushleft}
\begin{tabular}{|c||ccccccccc|}
\hline
$a$ & $0$ & $\cdots$ & $0.361\ldots$ & $\cdots$ & $0.542\ldots$ & $\cdots$ & $0.896\ldots$ & $\cdots$ & $1$\\
\hline
$C_{3,2}^{'}(a)$ & $+$ & $+$ & $0$ & $-$ & $-$ & $-$ & $0$ & $+$ & $+$\\
\hline
$C_{3,2}(a)$ & $0$ & $\nearrow$ & & $\searrow$ & $0$ & $\searrow$ & & $\nearrow$ & \\
\hline
\end{tabular}
\end{flushleft}

\begin{flushleft}
\begin{tabular}{|c||ccccccccc|}
\hline
$a$ & $0$ & $\cdots$ & $0.355\ldots$ & $\cdots$ & $0.5$ & $\cdots$ & $0.844\ldots$ & $\cdots$ & $1$\\
\hline
$C_{3,3}^{'}(a)$ & $+$ & $+$ & $0$ & $-$ & $-$ & $-$ & $0$ & $+$ & $+$\\
\hline
$C_{3,3}(a)$ & $0$ & $\nearrow$ & & $\searrow$ & $0$ & $\searrow$ & & $\nearrow$ & \\
\hline
\end{tabular}
\end{flushleft}

%グラフ書く

\begin{tikzpicture}
\begin{axis}[
        axis y line=center,
        axis x line=middle, 
        axis on top=true,
        xmin=-0.2,
        xmax=1.2,
        ymin=-0.5,
        ymax=0.5,
        height=10.0cm,
        width=15.0cm,
        grid,
        xtick={0,0.2,...,1},
        ytick={-0.4,-0.3,...,0.4},
    ]
    \addplot [domain=0:1, samples=50, mark=none, ultra thick, blue] {48*(x)^5 - 120*(x)^3 + 60*(x)^2 + 8*(x) - 2};
    \addplot [domain=0:1, samples=50, mark=none, ultra thick, orange] {72*(x)^5 - 60*(x)^4 - 60*(x)^3 + 40*(x)^2 + 2*(x)};
    \addplot [domain=0:1, samples=50, mark=none, ultra thick, red] {24*(x)^5 - 30*(x)^4 + 5*(x)^2};
    \addplot [domain=0:1, samples=50, mark=none, ultra thick, teal] {2*(x)^5 - 3*(x)^4 + (x)^3};
    \node [left, blue] at (axis cs: 0.84,0.43) {$C_{3,0}(a)$};
    \node [left, orange] at (axis cs: 0.6,0.35) {$C_{3,1}(a)$};
    \node [left, red] at (axis cs: 0.4,0.35) {$C_{3,2}(a)$};
    \node [left, teal] at (axis cs: 0.4,0.05) {$C_{3,3}(a)$};
\end{axis}
\end{tikzpicture}

Let  $c_{3,0,1}, c_{3,0,2},  c_{3,1,1}, c_{3,2,1}, c_{3,3,1}$ be the zeros of $C_{3,0}(a)$, $C_{3,1}(a)$, $C_{3,2}(a)$ and $C_{3,3}(a)$ in $0<a<1$. We see that 
$$
0<c_{3,0,1}<c_{3,3,1}<c_{3,2,1}<c_{3,1,1}<c_{3,0,2}<1.
$$
If $a$ is in the range $c_{3,0,1}<a<c_{3,3,1}\,\text{ or }c_{3,0,2}<a<1$ then $P_3(a,x)=0$ has no solution in $x>0$, because $C_{3,0}(a)$, $C_{3,1}(a)$, $C_{3,2}(a)$, and $C_{3,3}(a)$ are all positive or all negative.

In the case of $0<a<c_{3,0,1}$ or $c_{3,1,1}<a<c_{3,0,2}$, the sign of the constant term $C_{3,0}(a)$ is different from these of the remaining coefficients $C_{3,1}(a)$, $C_{3,2}(a)$, and $C_{3,3}(a)$. We thus conclude that there is only one solution of $P_3(a,x)=0$ in the range $x>0$ in these cases. 

Finally, suppose $a$ is in the range 
$$
c_{3,3,1}<a<c_{3,1,1}.
$$
Then we have $\alpha \beta \gamma >0$. If all $\alpha, \beta,$ and $\gamma$ are real, then, together with 
$\alpha\beta+\beta\gamma+\gamma\alpha < 0$, we see that only one of $\alpha, \beta, \gamma$ is positive,
i.e.,  $P_3(a,x)=0$ has two negative solutions and one positive solution. If two of $\alpha, \beta, \gamma$ are
complex (conjugate with each other), we see from $\alpha \beta \gamma >0$ that the other one is the only positive root.

Hence we  have shown that $P_3(a,x)$ has at most one real zero in the range $x>0$.\\

%---------------------------------2.4---------------------------------------------------
\textbf{The case of $N=4$.}
The argument is also similar. The $P_4(a,x)$ is a polynomial of degree $4$ in $x$. Notice that $C_{4,1}(a)$, $C_{4,2}(a)$, and $C_{4,3}(a)$  have two imaginary zeros, but by considering the zeros of the derivative $C_{4,1}(a)$, $C_{4,2}(a)$, and  $C_{4,3}(a)$ and the signs of their values at $a=0$, then we obtain the following table and graphs.
\begin{flushleft}
\begin{tabular}{|c||ccccccccccc|}
\hline
$a$ & $0$ & $\cdots$ & $0.176\ldots$ & $\cdots$ & $0.427\ldots$ & $\cdots$ & $0.715\ldots$ & $\cdots$ & $0.952\ldots$ & $\cdots$ & $1$\\
\hline
$C_{4,0}^{'}(a)$ & $+$ & $+$ & $0$ & $-$ & $-$ & $-$ & $0$ & $+$ & $+$ & $+$ & $+$\\
\hline
$C_{4,0}(a)$ & $1/6$ & $\nearrow$ & & $\searrow$ & $0$ & $\searrow$ & & $\nearrow$ & $0$ & $\nearrow$ & \\
\hline
\end{tabular}
\end{flushleft}

\begin{flushleft}
\begin{tabular}{|c||ccccccccccc|}
\hline
$a$ & $0$ & $\cdots$ & $0.138\ldots$ & $\cdots$ & $0.356\ldots$ & $\cdots$ & $0.757\ldots$ & $\cdots$ & $0.898\ldots$ & $\cdots$ & $1$\\
\hline
$C_{4,1}^{'}(a)$ & $+$ & $+$ & $0$ & $-$ & $-$ & $-$ & $0$ & $+$ & $+$ & $+$ & $+$\\
\hline
$C_{4,1}(a)$ & $1/6$ & $\nearrow$ & & $\searrow$ & $0$ & $\searrow$ & & $\nearrow$ & $0$ & $\nearrow$ & \\
\hline
\end{tabular}
\end{flushleft}

\begin{flushleft}
\begin{tabular}{|c||ccccccccccc|}
\hline
$a$ & $0$ & $\cdots$ & $0.138\ldots$ & $\cdots$ & $0.294\ldots$ & $\cdots$ & $0.632\ldots$ & $\cdots$ & $0.843\ldots$ & $\cdots$ & $1$\\
\hline
$C_{4,2}^{'}(a)$ & $+$ & $+$ & $0$ & $-$ & $-$ & $-$ & $0$ & $+$ & $+$ & $+$ & $+$\\
\hline
$C_{4,2}(a)$ & $1/60$ & $\nearrow$ & & $\searrow$ & $0$ & $\searrow$ & & $\nearrow$ & $0$ & $\nearrow$ & \\
\hline
\end{tabular}
\end{flushleft}

\begin{flushleft}
\begin{tabular}{|c||ccccccccccc|}
\hline
$a$ & $0$ & $\cdots$ & $0.151\ldots$ & $\cdots$ & $0.259\ldots$ & $\cdots$ & $0.603\ldots$ & $\cdots$ & $0.793\ldots$ & $\cdots$ & $1$\\
\hline
$C_{4,3}^{'}(a)$ & $+$ & $+$ & $0$ & $-$ & $-$ & $-$ & $0$ & $+$ & $+$ & $+$ & $+$\\
\hline
$C_{4,3}(a)$ & $0$ & $\nearrow$ & & $\searrow$ & $0$ & $\searrow$ & & $\nearrow$ & $0$ & $\nearrow$ & \\
\hline
\end{tabular}
\end{flushleft}

\begin{flushleft}
\begin{tabular}{|c||ccccccccccc|}
\hline
$a$ & $0$ & $\cdots$ & $0.162\ldots$ & $\cdots$ & $0.240\ldots$ & $\cdots$ & $0.588\ldots$ & $\cdots$ & $0.759\ldots$ & $\cdots$ & $1$\\
\hline
$C_{4,4}^{'}(a)$ & $+$ & $+$ & $0$ & $-$ & $-$ & $-$ & $0$ & $+$ & $+$ & $+$ & $+$\\
\hline
$C_{4,4}(a)$ & $0$ & $\nearrow$ & & $\searrow$ & $0$ & $\searrow$ & & $\nearrow$ & $0$ & $\nearrow$ & \\
\hline
\end{tabular}
\end{flushleft}

\begin{tikzpicture}
\begin{axis}[
        axis y line=center,
        axis x line=middle, 
        axis on top=true,
        xmin=-0.2,
        xmax=1.2,
        ymin=-0.5,
        ymax=0.5,
        height=10.0cm,
        width=15.0cm,
        grid,
        xtick={0,0.2,...,1},
        ytick={-0.4,-0.3,...,0.4},
    ]
    \addplot [domain=0:1, samples=50, mark=none, ultra thick, blue] {-5*(x)^6 + 20*(x)^4 - 15*(x)^3 - 3/2*(x)^2 + 3/2*(x) + 1/6};
    \addplot [domain=0:1, samples=50, mark=none, ultra thick, orange] {-10*(x)^6 + 10*(x)^5 + 15*(x)^4 - 15*(x)^3 - 1/2*(x)^2 + 5/6*(x) + 1/6};
    \addplot [domain=0:1, samples=50, mark=none, ultra thick, red] {-5*(x)^6 + 15/2*(x)^5 + 5/4*(x)^4 - 15/4*(x)^3 + 1/12*(x)^2 + 1/6*(x) + 1/60};
    \addplot [domain=0:1, samples=50, mark=none, ultra thick, teal] {-5/6*(x)^6 + 3/2*(x)^5 - 5/12*(x)^4 - 1/4*(x)^3 + 1/36*(x)^2 +1/90*(x)};
    \addplot [domain=0:1, samples=50, mark=none, ultra thick, violet] {-1/24*(x)^6 + 1/12*(x)^5 - 1/24*(x)^4 + 1/720*(x)^2};
    \node [left, blue] at (axis cs: 0.26,0.37) {$C_{4,0}(a)$};
    \node [left, orange] at (axis cs: 0.26,0.27) {$C_{4,1}(a)$};
    \node [left, red] at (axis cs: 0.26,0.07) {$C_{4,2}(a)$};
    \node [left, teal] at (axis cs: 0.78,-0.06) {$C_{4,3}(a)$};
    \node [left, violet] at (axis cs: 0.78,0.05) {$C_{4,4}(a)$};
\end{axis}
\end{tikzpicture}

Let $c_{4,0,i}, c_{4,1,i}, c_{4,2,i}, c_{4,3,i}, c_{4,4,i}\ (i=1,2)$ be the zeros of $C_{4,0}(a), C_{4,1}(a), C_{4,2}(a), C_{4,3}(a)$, and $C_{4,4}(a)$ in $0<a<1$.
We see that 
\[ 0<c_{4,4,1}<c_{4,3,1}<c_{4,2,1} <c_{4,1,1}<c_{4,0,1}<c_{4,4,2}<c_{4,3,2}<c_{4,2,2}<c_{4,1,2}<c_{4,0,2}<1. \]

We can easily see the following as before. If $a$ is in the range
$$
0<a<c_{4,4,1},\,c_{4,0,1}<a<c_{4,4,2},\,c_{4,0,2}<a<1
$$
then $P_4(a,x)=0$ has no real zero in the range $x>0$, and if $a$ is in
$$
c_{4,1,1}<a<c_{4,0,1},\,c_{4,1,2}<a<c_{4,0,2}
$$
then $P_4(a,x)=0$ has  only one solution in the range $x>0$.

Now, we only need to show that $P_4(a,x)$ has at most one real zero in the range $x>0$ in the case $c_{4,4,1}<a<c_{4,1,1},\,c_{4,4,2}<a<c_{4,1,2}$.
 In this case, $C_{4,0}(a)$ and $C_{4,4}(a)$ have different signs, so the sign of the product of all four roots of $P_4(a,x)$ is negative.
 If there are complex roots, then we see from this that there is only one positive root. Now suppose all roots are
 real.  Note the signs of $P_4(a,x)$ at $x=0$ and at $x\rightarrow\infty$ are different. Therefore, it is sufficient to show that $\partial/\partial x P_4(a,x)=0$ has at most one real zero in the range $x>0$. Since
$$
\frac{\partial}{\partial x}P_4(a,x)=4C_{4,4}(a)x^3+3C_{4,3}(a)x^2+2C_{4,2}(a)x+C_{4,1}(a),
$$
the same argument gives us the following. If $a$ is in the range
$$
c_{4,2,1}<a<c_{4,1,1}\  \text{or}\,\, c_{4,2,2}<a<c_{4,1,2},
$$
then the constant term $C_{4,1}(a)$ has different sign from other $C_{4,i}\ (i=2,3,4)$ and hence $\partial/\partial xP_4(a,x)=0$ has only one solution in the range $x>0$. If $a$ is in the range
$$
c_{4,4,1}<a<c_{4,2,1}\  \text{or}\,\, c_{4,4,2}<a<c_{4,2,2},
$$
then it holds $C_{4,4}(a)C_{4,1}(a)<0$ and $C_{4,4}(a)C_{4,2}<0$, thus $\partial/\partial xP_4(a,x)=0$ has only one solution in the range $x>0$ by the same argument as before. We therefore have shown that $P_4(a,x)$ has at most one real zero in the range $x>0$.
\end{proof}

\begin{proof}[Proof of \cref{cor}]
When $a\ne 1/2, 1$, $B_{2M+1}(a)B_{2M+3}(a)<0$ holds for any integer $M\ge0$. This together with \cref{goma12} and \cref{main} immediately gives the corollary.
\end{proof}

%\begin{remark}
%Examining the above proof, it would be possible to prove the general case (of $N$) if we could 
%show the zeros of $C_{N,m}(a)$ in $(0,1)$ were located in a particular order.
%\end{remark}

%------------------------------------§3--------------------------------------------
\section{Alternative proof of the uniqueness in the interval $(-4,-3)$}
For the interval $(-4,-3)$, we may proceed as follows using a result of Spira \cite[Theorem~3]{Spira}.

Suppose $B_4(a)B_5(a)<0$. Then $\zeta(s,a)$ has odd number of real zeros in the interval $(-4,-3)$.
 If we denote by $b_{4,1}$ and $b_{4,2}$ the two roots of $B_4(a)$ in $(0,1)$ ($b_{4,1}=0.2403\ldots<b_{4,2}=0.796\ldots$),
the condition  $B_4(a)B_5(a)<0$ is equivalent to $b_{4,1}<a<1/2$ or $b_{4,2}<a<1$.
 First, consider the case $1/4\le a<1/2$ or $b_{4,2}<a<1$. Spira's theorem shows that $\zeta(s,a)$ has exactly one real zero in the interval of width $2$ that contains the interval $(-4,-3)$. Therefore, the zero in $(-4,-3)$ is unique. 
 
In the case of $b_{4,1}<a<1/4$, Spira's theorem tells us that there are at most two real zeros of  $\zeta(s,a)$ 
in the interval $(-4,-3)$ (two length 2 intervals covers $(-4,-3)$). But since we know that the number of zeros is odd, we conclude there is only one.

%----------------------Acknowledgements--------------------------------------------------
\section*{Acknowledgements}
The author would like to express her sincere gratitude to Professors Masanobu Kaneko, Takashi Nakamura, and Toshiki Matsusaka for their helpful advice and comments. This work is supported by WISE program (MEXT) at Kyushu University.

%----------------------参考文献-----------------------------------------------------------------

%\bibliographystyle{amsplain}%{amsalpha}
%\bibliography{Reference} 

% --------------------------------------------------------------------------
\end{document}